\documentclass[11pt]{article}

\usepackage{amsmath,amsfonts,amssymb,enumerate}
\usepackage{bbm}
\usepackage{latexsym}
\usepackage{graphicx}
\usepackage{algpseudocode,algorithm}
\usepackage[margin=1in]{geometry}
\usepackage{epstopdf}
\usepackage[margin=0.5cm]{subcaption}

\newtheorem{theorem}{\bf Theorem}[section]
\newtheorem{lemma}[theorem]{\bf Lemma}

\newtheorem{example}[theorem]{\bf Example}

\newenvironment{proof}{\noindent{\em Proof:}}{\quad \hfill$\Box$\vspace{2ex}}

\title{Finding Dantzig selectors with a proximity operator based fixed-point algorithm \thanks{Cleared for public release by WPAFB Public Affairs on 09 Oct 2013.  Case Number: 88ABW-2013-4324.  This research is supported in part by an award from National Research Council via the Air Force Office of Scientific Research and by the US National Science Foundation under grant DMS-1115523.}}

\author{
Ashley Prater\thanks{Air Force Research Laboratory, Information Directorate, Rome, NY 13441} \and Lixin Shen\thanks{Department of Mathematics, Syracuse
University, Syracuse, NY 13244, USA. } \and Bruce W. Suter$^\dagger$
}
\date{}

\begin{document}

\maketitle
\begin{abstract}
In this paper, we study a simple iterative method for finding the Dantzig selector, which was designed for linear regression problems. The method consists of two main stages. The first stage is to approximate the Dantzig selector through a fixed-point formulation of solutions to the Dantzig selector problem.  The second stage is to construct a new estimator by regressing data onto the support of the approximated Dantzig selector.  We compare our method to an alternating direction method, and present the results of numerical simulations using both the proposed method and the alternating direction method on synthetic and real data sets.  The numerical simulations demonstrate that the two methods produce results of similar quality, however the proposed method tends to be significantly faster.
\end{abstract}

{\textbf{Key Words:}} Dantzig selector, proximity operator, fixed-point algorithm, alternating direction method

\section{Introduction}
This paper considers the problem of estimating a vector of parameter $\beta \in \mathbb{R}^p$ from the linear problem
\begin{equation}\label{problem:linear}
y = X \beta + z,
\end{equation}
where $y \in \mathbb{R}^n$ is a vector of observations, $X$ an $n \times p$ predictor matrix, and  $z$ a vector of independent normal random variables. The goal is to find a relevant parametric vector $\beta^\star \in \mathbb{R}^p$ among many potential candidates and obtain high prediction accuracy.

The $\ell_1$ penalized least squares estimator for problem~\eqref{problem:linear} has been the focus of a great deal of attention for variable selection and estimation in high-dimensional linear regression when the number of variables is much larger than the sample size \cite{Efron-Hastie-Johnstone-Tibshirani:AS:2004,Meinshausen-Buhlmann:AS:06,Osborne-Presnell-Turlach:JCGS:00,Tibshirani:jrss:96,Tibshirani:jrss:11,Zhao-Yu:JMLR:06}. Recently the Dantzig selector was proposed for problem~\eqref{problem:linear} in \cite{Candes-Tao:AS:07}.  The Dantzig selector $\widehat{\beta} \in \mathbb{R}^p$ is a solution to the optimization problem
\begin{equation}\label{problem:Dantzig}
\widehat{\beta} \in \mathrm{argmin}\{\|\beta\|_1: \|D^{-1}X^\top (X\beta-y)\|_\infty \le \delta\},
\end{equation}
with a fixed parameter $\delta>0$ and a diagonal matrix $D$ where the diagonal entries are equal to the $\ell_2$ norm of the columns of $X$. Here, we write $\|x\|_q$ for the $\ell_q$ norm of $x\in \mathbb{R}^p$, $1 \le q \le \infty$. Optimal $\ell_2$ rate properties for $\|\widehat{\beta} -\beta^\star\|_2$ were established under a sparsity scenario and impressive empirical performance on real world problems involving large values of $p$ was shown in \cite{Candes-Tao:AS:07}. Since then the Dantzig selector has received a considerable amount of attention. Discussions on the Dantzig selector can be found in  \cite{Bickel:AS:07,Cai-Lv:AS:07,Candes-Tao:AS:07b,Efron-Hastie-Tibshirani:AS:07,Friedlander-Saunders:AS:07,Meinshausen-Rocha-Yu:AS:07,Riotv:AS:07}.  In \cite{James-Radchenko-Lv:jrss:09}, an algorithm was proposed for fitting the entire coefficient path of the Dantzig selector with a similar computational cost to the least angle algorithm that is used to compute the $\ell_1$ minimization via the LASSO technique. The Dantzig selector is a convex, but not strictly convex, optimization problem. Unique solutions are in general not guaranteed. Conditions ensuring the uniqueness of the Dantzig selector were presented in \cite{Dicker-Lin:CJS:12}. In \cite{Li-Dicker-Zhao:SS:12} a new class of Dantzig selectors for linear regression problems for right-censored outcomes was proposed.

The importance of the Dantzig selector in linear regressions has been demonstrated in the aforementioned work. Efficient methods for solving  problem~\eqref{problem:Dantzig}, which however were not emphasized in the current literature, are highly needed. In \cite{Candes-Tao:AS:07}, the problem is cast as a linear program which is solved by using a primal-dual interior point algorithm~\cite{Body-Vandenberghe:04}. As it is well known, interior point methods are not efficient for large-scale problems. In \cite{Becker-Candes-Grant:MPC:10}, the problem is cast as linear cone programming problem for which a smooth approximation to its dual problem is solved by an optimal first-order method \cite{Beck-Teboulle:SIAMIS:09,Nesterov:MP:05}. Recently, an alternating direction method (ADM) for finding the Dantzig selector was studied in \cite{Lu-Pong-Zhang:CSDA:12}. Numerical experiments showed that this method usually outperforms the method in \cite{Becker-Candes-Grant:MPC:10} in terms of CPU time while producing solutions of comparable quality.   The problem  was rewritten in \cite{Lu-Pong-Zhang:CSDA:12} in a form to which ADM can be easily applied.  ADM itself is an iterative algorithm.  In each iterate, two subproblems are needed to be solved successively.  One of the subproblems has a closed form solution, while the other does not and is approximated by a nonmonotone gradient method proposed in \cite{Lu-Zhang:MP:12}.  To alleviate the difficulty caused by the subproblem without a closed form solution, a linearized ADM was proposed for the Dantzig selector and was shown to be efficient for solving both synthetic and real world data sets in \cite{Wang-Yuan:SISC:12}.

In this paper, the Dantzig selectors for problem~\eqref{problem:Dantzig} are found by an algorithm based upon proximity operators. We first rewrite the problem as an unconstrained structural optimization problem via an indicator function. The resulting problem is then solved by a primal-dual algorithm. In comparison with the one given in \cite{Lu-Pong-Zhang:CSDA:12}, our proposed algorithm is easy to implement.  Ours achieves comparable quality results while consuming much less CPU time.

The outline of the paper is organized as follows. In Section \ref{sec:alg} we present our fixed-point theory based proximity operator algorithm for solving problem \eqref{problem:Dantzig}.  In Section \ref{sec:experiments}, we present numerical experiments comparing the accuracy and efficiency of the proposed algorithm with ADM proposed in~\cite{Lu-Pong-Zhang:CSDA:12}.  The first set of experiments uses simulated sparse signals and the second set uses samples of biomarker data to predict the diagnosis of leukemia patients.  Section~\ref{sec:conclusion} concludes the paper.

The following notation will be used in the rest of the paper.  For any vector $u \in \mathbb{R}^d$, let $u_i$ and $u(i)$ both denote the $i$-th component of $u$. Also for any vector $u\in\mathbb{R}^d$, $|u|$ is the component-wise absolute values of $u$, that is the $i$-th component of $|u|$ is $|u_i|$, while $\mathrm{sign}(u)$ is the vector whose $i$-th component is $1$ if $u_i>0$ and $-1$ otherwise. Given two vectors $u$ and $v$ in $\mathbb{R}^d$,  $x \circ y$ denotes the Hadamard (component-wise) product of $u$ and $v$, $\max\{u,v\}$ denotes the vector whose $i$-th entry is $\max\{u_i,v_i\}$, and $\min\{u,v\}$ denotes the vector whose $i$-th entry is $\min\{u_i,v_i\}$. Let $\mathbbm{1}$ denote the vector of all ones whose dimension should be clear from the context.

The natural numbers are given by $\mathbb{N}$.  For the  usual $d$-dimensional Euclidean space denoted by $\mathbb{R}^d$  we define $\langle x, y \rangle:=\sum_{i=1}^d x_i y_i$, for $x, y \in \mathbb{R}^d$,  the standard inner product in $\mathbb{R}^d$. We denote by $\|\cdot\|_1$, $\|\cdot\|_2$, and $\|\cdot\|_\infty$ the $\ell_1$ norm, $\ell_2$ norm, and the $\ell_\infty$ norm of a vector, respectively.  The class of all lower semicontinuous convex functions $f: \mathbb{R}^d \rightarrow (-\infty, +\infty]$ such that $\mathrm{dom} f:=\{x \in \mathbb{R}^d: f(x) <+\infty\} \neq \emptyset$ is denoted by $\Gamma_0(\mathbb{R}^d)$. For a closed convex set $\mathcal{C}$ of  $\mathbb{R}^d$, its indicator function $\iota_{\mathcal{C}}$ is in $\Gamma_0(\mathbb{R}^d)$ and is defined as
$$
\iota_\mathcal{C}(u): =\left\{
               \begin{array}{ll}
                 0, & \hbox{if $u\in \mathcal{C}$,} \\
                 +\infty, & \hbox{otherwise.}
               \end{array}
             \right.
$$
For a function $f \in \Gamma_0(\mathbb{R}^d)$, $\mathrm{argmin}_{x \in C} f(x)$ is the set of points of the given argument in $C$ for which $f$ attains its minimum value, i.e., $\mathrm{argmin}_{x \in C} f(x)=\{x\in C: f(y) \ge f(x) \; \mbox{for all} \; y \in C\}$.

\section{The Dantzig Selector with Proximity Algorithms}\label{sec:alg}

In this section, we develop a proximity algorithm for solving the optimization problem~\eqref{problem:Dantzig}. We begin with reviewing two existing works on this problem, namely the alternating direction method (ADM) proposed in \cite{Lu-Pong-Zhang:CSDA:12} and the linearized alternating direction method of multipliers (LADM) proposed in \cite{Wang-Yuan:SISC:12}. Both methods work on the reformulated optimization problem~\eqref{problem:Dantzig} with $D=I$ as follows:
\begin{equation}\label{problem:reformulation}
\min_{\beta\in \mathbb{R}^p, \tau \in \{\tau: \|\tau\|_\infty \le \delta\}} \{\|\beta\|_1: X^\top(X\beta-y)=\tau\},
\end{equation}
where $\tau \in \mathbb{R}^p$ is an auxiliary variable.  The augmented Lagrangian function for problem~\eqref{problem:reformulation} is
\begin{equation*}
L_c(\beta, \tau, \gamma):=\|\beta\|_1+\langle \gamma, X^\top(X\beta-y)-\tau\rangle +\frac{c}{2}\|X^\top(X\beta-y)-\tau\|_2^2,
\end{equation*}
where $\gamma \in \mathbb{R}^p$ is the Lagrange multiplier and $c>0$ is a penalty parameter.

The iterative scheme of ADM for optimization problem~\eqref{problem:reformulation} is
\begin{equation*}\label{scheme:ADM-1}
\left\{
  \begin{array}{l}
    \tau^{k+1} \leftarrow \mathrm{argmin}_{\tau \in \{\tau: \|\tau\|_\infty \le \delta\}} L_c(\beta^{k}, \tau, \gamma^k),\\
    \beta^{k+1} \leftarrow \mathrm{argmin}_{\beta \in \mathbb{R}^p} L_c(\beta, \tau^{k+1}, \gamma^k),\\
    \gamma^{k+1}=\gamma^k+c (X^\top(X\beta^{k+1}-y)-\tau^{k+1}),
  \end{array}
\right.
\end{equation*}
which, with some elementary manipulations, can equivalently be written as
\begin{equation}\label{scheme:ADM-2}
\left\{
  \begin{array}{l}
    \tau^{k+1} \leftarrow \mathrm{argmin}_{\tau \in \{\tau: \|\tau\|_\infty \le \delta\}} \|\tau-(X^\top(X\beta^{k}-y)+\frac{\gamma^{k}}{c})\|_2^2,\\
    \beta^{k+1} \leftarrow \mathrm{argmin}_{\beta \in \mathbb{R}^p} \{\|\beta\|_1+\frac{c}{2}\|X^\top(X\beta-y)-\tau^{k+1} + \frac{\gamma^{k}}{c}\|_2^2\},\\
    \gamma^{k+1}=\gamma^k+c (X^\top(X\beta^{k+1}-y)-\tau^{k+1}).
  \end{array}
\right.
\end{equation}
The $\tau$-related subproblem in \eqref{scheme:ADM-2} has a closed form solution, but the $\beta$-related subproblem does not and is solved approximately by using the nonmonotone gradient method in \cite{Lu-Pong-Zhang:CSDA:12}.

The iterative scheme of LADM for optimization problem~\eqref{problem:reformulation} is
\begin{equation}\label{scheme:LADM}
\left\{
  \begin{array}{l}
    \beta^{k+1} \leftarrow \mathrm{argmin}_{\beta \in \mathbb{R}^p}\{\|\beta\|_1+c\langle v^k, \beta-\beta^k\rangle + \frac{\ell}{2}\|\beta-\beta^k\|_2^2\},\\
    \tau^{k+1} \leftarrow \mathrm{argmin}_{\tau \in \{\tau: \|\tau\|_\infty \le \delta\}} \|\tau-(X^\top(X\beta^{k+1}-y)+\frac{\gamma^{k}}{c})\|_2^2,\\
    \gamma^{k+1}=\gamma^k+c (X^\top(X\beta^{k+1}-y)-\tau^{k+1}),
  \end{array}
\right.
\end{equation}
where $\ell>0$ is a proximal parameter and $v^k:=X^\top X(X^\top(X\beta^k-y)-\tau^{k} + \frac{\gamma^{k}}{c})$. Note that the order of updating $\tau^{k+1}$ and $\beta^{k+1}$ in ADM is reversed in LADM. For the $\beta$-subproblem in LADM, the last two terms in the objective function can be viewed as the linearization of the quadratic term $\frac{c}{2}\|X^\top(X\beta-y)-\tau^{k} + \frac{\gamma^{k}}{c}\|_2^2$ with respect to $\beta$ at $\beta^k$ after dropping a constant. Furthermore, the $\beta$-subproblem, after completing the square of these two terms and ignoring the resulting constant term, is the same as
\begin{equation*}
\beta^{k+1} \leftarrow \mathrm{argmin}_{\beta \in \mathbb{R}^p}\{\|\beta\|_1
+\frac{\ell}{2}\|\beta-(\beta^k-\frac{c}{\ell}v^k)\|_2^2,
\end{equation*}
which has a closed form solution. The $\tau$-subproblem has a closed form solution as in ADM. Therefore, LADM can be easily and efficiently implemented. It was shown in \cite{Wang-Yuan:SISC:12} that for any $c>0$ and $\ell> 2 \|X^\top X\|_2^2$ and any initial iterate $(\beta^0, \tau^0, \gamma^0)$, the sequence $\{(\beta^k, \tau^k, \gamma^k): k \in \mathbb{N}\}$ converges.  Furthermore, the limit of the sequence $\{(\beta^k, \tau^k): k \in \mathbb{N}\}$ is a solution of the Dantzig selector problem \eqref{problem:reformulation}.

In the following,  we present our fixed-point theory based proximity operator algorithm for solving the optimization problem~\eqref{problem:Dantzig}. For simplicity of exposition, with the matrices $X$ and $D$, the vector $y$, and the constant $\delta$ appearing in problem~\eqref{problem:Dantzig}, we set
\begin{equation}\label{eq:A-b-C}
A:=D^{-1} X^\top X, \quad b:=D^{-1}X^\top y, \quad \mathcal{C}:=\{\beta \in \mathbb{R}^p: \|\beta-b\|_\infty \le \delta\}.
\end{equation}

Then the optimization problem~\eqref{problem:Dantzig} can be rewritten as
\begin{equation}\label{problem:Dantzig-New}
\widehat{\beta} \in \mathrm{argmin}\{\|\beta\|_1 + \iota_{\mathcal{C}}(A \beta): \beta\in \mathbb{R}^p\}.
\end{equation}
The objective function of this problem is convex and coercive thanks to the $\ell_1$-norm being coercive. Hence a solution to problem~\eqref{problem:Dantzig-New} exists and can be characterized in terms of proximity operator. To this end, we review the definition of proximity operator.

For a function $f \in \Gamma_0(\mathbb{R}^d)$, the proximity operator of $f$ with parameter $\lambda$, denoted by $\mathrm{prox}_{\lambda f}$, is a mapping from $\mathbb{R}^d$ to itself, defined for a given point $x \in \mathbb{R}^d$ by
$$
\mathrm{prox}_{\lambda f} (x):=\mathop{\mathrm{argmin}} \left\{\frac{1}{2\lambda} \|u-x\|^2_2 + f(u): u \in \mathbb{R}^d \right\}.
$$

Now, we can present a characterization of solutions of problem~\eqref{problem:Dantzig-New} that is simply derived from Fermat's rule.
\begin{theorem}\label{thm:char}
Let the $p\times p$ matrix $A$ and the vector $b \in \mathbb{R}^p$ be given in \eqref{eq:A-b-C}. If $\beta \in \mathbb{R}^p$ is a solution to problem~\eqref{problem:Dantzig-New}, then for any $\alpha>0$ and $\lambda>0$ there exists a vector $\tau \in \mathbb{R}^p$ such that
\begin{eqnarray}\label{eq:char1}
\beta&=&\mathrm{prox}_{\frac{1}{\alpha}\|\cdot\|_1}\left(\beta-\frac{\lambda}{\alpha} A^\top \tau\right),\\
\tau&=&(I-\mathrm{prox}_{\iota_{\mathcal{C}}})(A\beta+\tau) \label{eq:char2}.
\end{eqnarray}
Conversely, if there exist $\alpha>0$ and $\lambda>0$ such that $\beta, \tau \in \mathbb{R}^p$ satisfy equations~\eqref{eq:char1} and \eqref{eq:char2}, then $\beta$ is a solution of problem~\eqref{problem:Dantzig-New}.
\end{theorem}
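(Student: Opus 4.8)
The plan is to read both implications off Fermat's rule together with two standard facts about proximity operators in $\Gamma_0$: the equivalence $p = \mathrm{prox}_{\mu g}(x) \Leftrightarrow x-p \in \partial(\mu g)(p)$ for $\mu>0$, and Moreau's identity $\mathrm{prox}_{g} + \mathrm{prox}_{g^*} = I$. Set $F(\beta):=\|\beta\|_1 + \iota_{\mathcal{C}}(A\beta) \in \Gamma_0(\mathbb{R}^p)$; by Fermat's rule, $\beta$ solves \eqref{problem:Dantzig-New} if and only if $0 \in \partial F(\beta)$. The first step is to split this subdifferential. Since $\|\cdot\|_1$ is finite and continuous everywhere, the sum rule gives $\partial F(\beta) = \partial\|\cdot\|_1(\beta) + \partial(\iota_{\mathcal{C}}\circ A)(\beta)$ unconditionally; to replace the second summand by $A^\top\partial\iota_{\mathcal{C}}(A\beta)$ — with $\partial\iota_{\mathcal{C}}(A\beta)$ the normal cone of $\mathcal{C}$ at $A\beta$ — I need the constraint qualification $\mathrm{range}(A)\cap\mathrm{int}(\mathcal{C})\neq\emptyset$.

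Verifying that qualification is the only non-routine point, and I would settle it from the concrete data in \eqref{eq:A-b-C}. Because $\mathrm{range}(X^\top X)=\mathrm{range}(X^\top)$ and $D$ is invertible, $b = D^{-1}X^\top y \in \mathrm{range}(D^{-1}X^\top X) = \mathrm{range}(A)$, while $b\in\mathrm{int}(\mathcal{C})$ trivially since $\|b-b\|_\infty = 0 < \delta$; hence $b$ lies in $\mathrm{range}(A)\cap\mathrm{int}(\mathcal{C})$ and the chain rule applies. Consequently $\beta$ is optimal exactly when there is some $w\in\partial\iota_{\mathcal{C}}(A\beta)$ with $-A^\top w\in\partial\|\cdot\|_1(\beta)$.

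It remains to convert this pair of conditions into \eqref{eq:char1}--\eqref{eq:char2} and back. For the forward direction, fix any $\alpha,\lambda>0$, put $\tau := w/\lambda$ (still in the cone $\partial\iota_{\mathcal{C}}(A\beta)$), and observe that $-A^\top w\in\partial\|\cdot\|_1(\beta)$ is the same as $-\frac{\lambda}{\alpha}A^\top\tau\in\frac{1}{\alpha}\partial\|\cdot\|_1(\beta)$, which by the prox--subdifferential equivalence is exactly \eqref{eq:char1} evaluated at $\beta-\frac{\lambda}{\alpha}A^\top\tau$; and, using Moreau's identity to write $I-\mathrm{prox}_{\iota_{\mathcal{C}}}=\mathrm{prox}_{\iota_{\mathcal{C}}^*}$, equation \eqref{eq:char2} says $A\beta\in\partial(\iota_{\mathcal{C}}^*)(\tau)$, equivalently $\tau\in\partial\iota_{\mathcal{C}}(A\beta)$ by the duality between the subdifferentials of $\iota_{\mathcal{C}}$ and its conjugate, which holds. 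For the converse, suppose \eqref{eq:char1}--\eqref{eq:char2} hold for some $\alpha,\lambda>0$; reading them through the same two identities yields $-\lambda A^\top\tau\in\partial\|\cdot\|_1(\beta)$ and $\tau\in\partial\iota_{\mathcal{C}}(A\beta)$, so $\lambda\tau\in\partial\iota_{\mathcal{C}}(A\beta)$ by conicity, and then
\[
0 = -\lambda A^\top\tau + A^\top(\lambda\tau)\in\partial\|\cdot\|_1(\beta) + A^\top\partial\iota_{\mathcal{C}}(A\beta)\subseteq\partial F(\beta),
\]
where the last inclusion needs no qualification; Fermat's rule then gives that $\beta$ solves \eqref{problem:Dantzig-New}. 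The main obstacle is thus checking the constraint qualification in the forward direction; once that is in hand, everything reduces to standard manipulations with proximity operators.
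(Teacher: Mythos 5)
Your proof is correct and follows essentially the same route as the paper's: Fermat's rule, the sum/chain rules for subdifferentials, conicity of the normal cone $\partial\iota_{\mathcal{C}}(A\beta)$, and the prox--subdifferential equivalence. The only differences are matters of added rigor rather than method: you explicitly check the constraint qualification for the chain rule (showing $b\in\mathrm{range}(A)\cap\mathrm{int}(\mathcal{C})$, which is genuinely needed since $A=D^{-1}X^\top X$ can be singular) and you use Moreau's identity to identify \eqref{eq:char2} with $\tau\in\partial\iota_{\mathcal{C}}(A\beta)$, two steps the paper leaves implicit by appealing to a general result of Li--Micchelli--Shen--Xu.
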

\begin{proof}\ \ The proof of the result follows straightforwardly a general result in \cite[Proposition 1]{Li-Micchelli-Shen-Xu:IP-12}. For completeness, we present its proof here. First, we assume that $\beta$ is a solution to problem~\eqref{problem:Dantzig-New}. By Fermat's rule and the chain rule of subdifferentiation,  $0 \in \partial \|\cdot\|_1 (\beta) + A^\top \partial \iota_\mathcal{C} (A\beta)$. Then for any $\alpha>0$ and $\lambda>0$ there exists $\tau \in \frac{1}{\lambda} \partial \iota_\mathcal{C}(A\beta)$ such that $-\frac{\lambda}{\alpha} A^\top \tau \in \partial \left(\frac{1}{\alpha}\|\cdot\|_1\right) (\beta)$, that is, in terms of proximity operator, equation~\eqref{eq:char1}. Since the set $\partial \iota_\mathcal{C}(A\beta)$ is a cone, then $\tau \in \frac{1}{\lambda} \partial \iota_\mathcal{C}(A\beta)$ implies $\tau \in \partial \iota_\mathcal{C}(A\beta)$ which is essentially equivalent to equation~\eqref{eq:char2}.

Conversely, if equations~\eqref{eq:char1} and \eqref{eq:char2} are satisfied, we then have $-\frac{\lambda}{\alpha} A^\top \tau \in \partial \left(\frac{1}{\alpha}\|\cdot\|_1\right) (\beta)$ and $\tau \in \partial \iota_\mathcal{C}(A\beta)$ accordingly. Using the fact that the set $\partial \iota_\mathcal{C}(A\beta)$ is a cone again, the second inclusion $\tau \in \partial \iota_\mathcal{C}(A\beta)$ implies $\frac{\lambda}{\alpha}\tau \in \frac{1}{\alpha}\partial \iota_\mathcal{C}(A\beta)$. Multiplying $A^\top$ to both sides of the previous inclusion and using the chain rule $\partial(\iota_\mathcal{C} \circ A)(\beta)=A^\top \partial \iota_\mathcal{C}(A\beta)$, we have that $\frac{\lambda}{\alpha} A^\top \tau \in \frac{1}{\alpha}\partial(\iota_\mathcal{C} \circ A)(\beta)$. Since $-\frac{\lambda}{\alpha}A^\top \tau \in \partial(\frac{1}{\alpha})(\beta)$, we obtain $0 \in \partial \|\cdot\|_1 (\beta) + \partial(\iota_\mathcal{C} \circ A)(\beta)$. This shows that $\beta$ is a solution to problem~\eqref{problem:Dantzig-New}.
\end{proof}

We comment on the computation of the proximity operators $\mathrm{prox}_{\frac{1}{\alpha}\|\cdot\|_1}$ and $\mathrm{prox}_{\iota_\mathcal{C}}$ appearing in equations~\eqref{eq:char1} and \eqref{eq:char2}. The proximity operator $\mathrm{prox}_{\frac{1}{\alpha}\|\cdot\|_1}$ at any $u \in \mathbb{R}^p$ is the well-known soft-thresholding operator given as follow:
\begin{equation}\label{eq:prox-L1}
\mathrm{prox}_{\frac{1}{\alpha}\|\cdot\|_1} (u) = \mathrm{sign}(u) \circ \max\left\{|u|-\frac{1}{\alpha}\mathbbm{1}, 0\right\}.
\end{equation}

\begin{lemma}\label{lemma:proj-prox}
Let $\delta$ be a constant, let $b$ be a vector in $\mathbb{R}^p$, and let the set $\mathcal{C}$ be given in \eqref{eq:A-b-C}. Then for any vector $v \in\mathbb{R}^p$,
\begin{equation}\label{eq:prox-C}
\mathrm{prox}_{\iota_\mathcal{C}}(v)=b+\min\{\max\{v-b, -\delta \mathbbm{1}\}, \delta \mathbbm{1}\}.
\end{equation}
and
\begin{equation*}
(I-\mathrm{prox}_{\iota_\mathcal{C}})(v) = \mathrm{prox}_{\delta\|\cdot\|_1} (v-b).
\end{equation*}
\end{lemma}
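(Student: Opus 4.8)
The plan is to use the fact that the proximity operator of an indicator function is the Euclidean projection, combined with the product structure of the box $\mathcal{C}$. First I would recall that $\mathrm{prox}_{\iota_\mathcal{C}}(v) = \mathrm{argmin}\{\frac{1}{2}\|u-v\|_2^2 : u \in \mathcal{C}\}$, the projection of $v$ onto $\mathcal{C}$. Since $\mathcal{C}=\{u: \|u-b\|_\infty \le \delta\}$ is the translated hypercube $b+[-\delta,\delta]^p$, the constraint separates into the independent scalar constraints $-\delta \le u_i - b_i \le \delta$, and the objective $\frac{1}{2}\|u-v\|_2^2$ is separable as well. Hence the minimization decouples across coordinates: the $i$-th component of $\mathrm{prox}_{\iota_\mathcal{C}}(v)$ is the minimizer of $(u_i-v_i)^2$ over $u_i \in [b_i-\delta, b_i+\delta]$, which is the clipping of $v_i$ to that interval, namely $b_i + \min\{\max\{v_i - b_i, -\delta\}, \delta\}$. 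Stacking the coordinates gives \eqref{eq:prox-C}.

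For the second identity I would substitute \eqref{eq:prox-C} and set $w := v-b$, so that the claim reduces to the coordinate-wise scalar identity $w_i - \min\{\max\{w_i, -\delta\},\delta\} = \mathrm{sign}(w_i)\,\max\{|w_i|-\delta, 0\}$, the right-hand side being the $i$-th component of $\mathrm{prox}_{\delta\|\cdot\|_1}(w)$ by \eqref{eq:prox-L1}. This is verified by the three cases $|w_i|\le\delta$, $w_i>\delta$, and $w_i<-\delta$: in each case the clipped value and the soft-thresholded value recombine exactly to $w_i$. Alternatively, one can invoke the Moreau decomposition $\mathrm{prox}_{\iota_\mathcal{C}} + \mathrm{prox}_{(\iota_\mathcal{C})^*} = I$, observe that $(\iota_\mathcal{C})^* = \langle \cdot, b\rangle + \delta\|\cdot\|_1$ is the support function of the box, and note that adding the linear term $\langle\cdot,b\rangle$ inside the proximity objective is equivalent to shifting the argument by $b$, which turns $\mathrm{prox}_{(\iota_\mathcal{C})^*}(v)$ into $\mathrm{prox}_{\delta\|\cdot\|_1}(v-b)$.

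There is no genuine obstacle here; the only care needed is bookkeeping — handling the translation by $b$ consistently in both identities, and justifying the passage from the scalar computations to the vector statements via the separability of both the quadratic objective and the box constraint. The case analysis for the second identity is routine but should be written out so the clip-plus-threshold cancellation is visible to the reader.
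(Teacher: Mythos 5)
Your proposal is correct and follows essentially the same route as the paper: identify $\mathrm{prox}_{\iota_\mathcal{C}}$ with the Euclidean projection onto the translated cube, obtain the clipping formula coordinate-wise, and then verify the second identity through the scalar clip-plus-soft-threshold cancellation matching \eqref{eq:prox-L1}. The extra remark via the Moreau decomposition with $(\iota_\mathcal{C})^*=\langle\cdot,b\rangle+\delta\|\cdot\|_1$ is a valid alternative but not needed; the main argument coincides with the paper's proof.
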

\begin{proof}\ \
It is well-known that the proximity operator  $\mathrm{prox}_{\iota_\mathcal{C}}$ is the projection operator onto the set $\mathcal{C}$. Since the set $\mathcal{\mathcal{C}}$ is the cube with $b$ as its center and $2\delta$ as the length of its side in $\mathbb{R}^p$.  Hence, $\mathrm{prox}_{\iota_\mathcal{C}}(v)$ the projection of the vector $v \in \mathbb{R}^p$ is given by \eqref{eq:prox-C}.
Further, it holds that $(I-\mathrm{prox}_{\iota_\mathcal{C}})(v)=(v-b) - \min\{\max\{(v-b), -\delta \mathbbm{1}\}, \delta \mathbbm{1}\}$. From this identity, we can directly check that for each $i$ from $1$ to $n$
$$
(v-b)_i - \min\{\max\{(v-b)_i, -\delta \}, \delta \}=\mathrm{sign}((v-b)_i) \cdot \max\{|(v-b)_i|-\delta,0\},
$$
which, by using equation~\eqref{eq:prox-L1}, is $\mathrm{prox}_{\delta\|\cdot\|_1} ((v-b)_i)$. This completes the proof.
\end{proof}

As a result of Lemma~\ref{lemma:proj-prox}, equation~\eqref{eq:char2} can be rewritten as follows:
\begin{equation}\label{eq:char2-new}
\tau=\mathrm{prox}_{\delta\|\cdot\|_1} (A\beta+\tau-b).
\end{equation}
Therefore, by Theorem~\ref{thm:char}, finding a solution $\beta$ to problem~\eqref{problem:Dantzig-New} amounts to solving the coupled fixed-point equations~\eqref{eq:char1} and \eqref{eq:char2-new}. 

Two iterative schemes can be derived from equations~\eqref{eq:char1} and \eqref{eq:char2-new}. Let us write equation~\eqref{eq:char1} as $\beta=\mathrm{prox}_{\frac{1}{\alpha}\|\cdot\|_1}\left(\beta-\frac{\lambda}{\alpha} A^\top (2\tau-\tau)\right)$. With any initial estimates $\tau^{-1}=\tau^0$ and $\beta^0$, the first iterative scheme based upon equations~\eqref{eq:char1} and \eqref{eq:char2-new} is as follows:
\begin{eqnarray}\label{eq:itr}
\left\{\begin{array}{l}
\beta^{k+1}= \mathrm{prox}_{\frac{1}{\alpha}\|\cdot\|_1}(\beta^{k}-\frac{\lambda}{\alpha} A^\top(2 \tau^k -\tau^{k-1})), \\
\tau^{k+1}= \mathrm{prox}_{\delta\|\cdot\|_1}(A \beta^{k+1}+\tau^k-b). \label{eq:itr-tau}
\end{array}\right.
\end{eqnarray}
We would like to comment the connection of this scheme with some existing ones. The dual formulation of \eqref{problem:Dantzig-New}, as derived in \cite{Lu-Pong-Zhang:CSDA:12}, is
$$
\max_{\tau \in \mathbb{R}^p}\{- \langle b, \tau \rangle -\delta \|\tau\|_1: \|A^\top \tau\|_\infty \le 1 \}.
$$
Applying the primal-dual hybrid gradient method (see \cite[Equation 2.18]{Esser-Zhang-Chan:SIAMIS:2010}) to the above dual formulation yields exactly the iterative scheme \eqref{eq:itr}. It was further pointed out in \cite{Chambolle-Pock:JMIV11} that the iterative scheme \eqref{eq:itr} is essentially the same as the linearized ADM applying to problem~\eqref{problem:Dantzig-New}. In other words,  the iterative scheme \eqref{eq:itr} is the same as \eqref{scheme:LADM} in the case of $D=I$.

Now, let us introduce the second iterative scheme for problem~\eqref{problem:Dantzig-New}.  Let us write equation~\eqref{eq:char2-new} as
$\tau=\mathrm{prox}_{\delta\|\cdot\|_1} (A(2\beta-\beta)+\tau-b)$. With any initial estimates $\beta^{-1}=\beta^0$ and $\tau^0$, the second iterative scheme based upon equations~\eqref{eq:char1} and \eqref{eq:char2-new} is as follows:
\begin{eqnarray}\label{eq:itr-tau-beta}
\left\{\begin{array}{l}
\tau^{k+1}= \mathrm{prox}_{\delta\|\cdot\|_1}(A (2\beta^{k}-\beta^{k-1})+\tau^k-b), \\
\beta^{k+1}= \mathrm{prox}_{\frac{1}{\alpha}\|\cdot\|_1}(\beta^{k}-\frac{\lambda}{\alpha} A^\top \tau^{k+1}).
\end{array}\right.
\end{eqnarray}

The sequence $\{(\beta^k, \tau^k): k \in \mathbb{N}\}$ generated by the iterative schemes~\eqref{eq:itr} and \eqref{eq:itr-tau-beta} will converge for any initial seeds when $\lambda/\alpha < 1/\|A\|_2^2$. The proof of this convergence result can be found in \cite{Chambolle-Pock:JMIV11,Li-Micchelli-Shen-Xu:IP-12}. Hence, the limit of the sequence $\{(\beta^k, \tau^k): k \in \mathbb{N}\}$ is a fixed-point of equations~\eqref{eq:char1} and \eqref{eq:char2}. In particular, the limit of the sequence $\{\beta^k: k \in \mathbb{N}\}$ is a solution to problem \eqref{problem:Dantzig-New}.

As noted in \cite{Candes-Tao:AS:07}, the Dantzig selector often slightly underestimates the true values of the nonzero parameters. To correct this bias and increase performance in practical settings, a postprocessing procedure was proposed in \cite{Candes-Tao:AS:07}. Assume that $\beta^\infty$ is the limit of the sequence $\{\beta^k: k \in \mathbb{N}\}$ that is generated through the iterative scheme~\eqref{eq:itr-tau-beta}. This postprocessing consists of two steps. The first step is to estimate $\Lambda:=\{i: \beta_i^\infty \neq 0\}$, the support of the vector $\beta^\infty$. Let $X_\Lambda$ be the $n \times |\Lambda|$ submatrix obtained by extracting the columns of $X$ corresponding to the indices in $\Lambda$, and let $\widehat{\beta}_\Lambda$ be the $|\Lambda|$-dimensional vector obtained by extracting the coordinates of $\widehat{\beta} \in \mathbb{R}^p$ corresponding to the indices in $\Lambda$. The second step of the postprocessing is to construct the estimator $\widehat{\beta} \in \mathbb{R}^p$ such that
\begin{equation*}
\widehat{\beta}_\Lambda =  \mathrm{argmin}\{\|X_\Lambda\beta -y\|_2: \beta \in \mathbb{R}^p\}
\end{equation*}
and set the other coordinates to zero. If the matrix $X_\Lambda^\top X_\Lambda$ is invertible then $\widehat{\beta}_\Lambda = (X_\Lambda^\top X_\Lambda)^{-1} X_\Lambda^\top y$. 

Putting all above discussion together, a complete two-stage procedure for finding a solution of problem~\eqref{problem:Dantzig-New} is described in Algorithm~\ref{alg:matrix-final}.

\begin{algorithm}[htb!]\caption{(Two-stage scheme for problem~\eqref{problem:Dantzig-New})}\label{alg:matrix-final}
 \begin{algorithmic}[htb!]
	\State \textbf{Input:} 
	Set the fixed parameters
	\[y\in\mathbb{R}^n,\quad A\in\mathbb{R}^{p\times p},\quad b \in\mathbb{R}^p, \quad \delta,\;\alpha,\; tol\in\mathbb{R}_+, \text{ and } \lambda = 0.999\alpha / \|A\|_2^2.\]
	\State \textbf{Initialization:}
	Set the initial parameters 
	\[\tau^0 = 0, \quad \beta^{-1} = \beta^0 = 0,\text{ and } k=0.\]

	\State \textbf{Stage-I:} Generate the sequence $\{(\tau^k,\beta^k):k\in\mathbb{N}\}$ using Equations~\eqref{eq:prox-L1} and~\eqref{eq:itr-tau-beta}.\\
	\While{ (stopping criterion not met)}\\
	\begin{eqnarray*}
		\begin{array}{l}
		\tau^{k+1}\leftarrow \mathrm{prox}_{\delta\|\cdot\|_1}(A (2\beta^{k}-\beta^{k-1})+\tau^k-b), \\
		\beta^{k+1}\leftarrow \mathrm{prox}_{\frac{1}{\alpha}\|\cdot\|_1}(\beta^{k}-\frac{\lambda}{\alpha} A^\top \tau^{k+1}),\\
		k\leftarrow k+1.
		\end{array}
	\end{eqnarray*}
	\EndWhile
	
	\State \textbf{Stage-II:}
	\noindent Let $\displaystyle \left(\tau^\infty, \beta^\infty\right)$ be the last set of parameters computed in Stage-I.  
	\begin{itemize}
		\item Approximate $\mathrm{supp}(\beta^\infty)$ by $\Lambda = \{j : | \beta^\infty(j) | < tol \}$.
		\item Compute  $\widehat{v} = \displaystyle \mathrm{argmin}_{v \in \mathbb{R}^{|\Lambda|}} \left\{ \|X_{\Lambda}v - y\|_2 \right\}$.
		\item Extend $\widehat{v}$ to form the Dantzig selector $\widehat{\beta}$ on $\Lambda$:
		\[ \begin{cases} \widehat{\beta}(\Lambda(i)) = \widehat{v}(i), & \text{ for } i = 1:|\Lambda|,\\
		\widehat{\beta}(j) = 0, &\text{ for } j\notin \Lambda. \end{cases} \]
	\end{itemize}
 \end{algorithmic}
\end{algorithm}

Stage-I of Algorithm~\ref{alg:matrix-final} terminates once the sequence $\{(\tau^k,\beta^k):k\in\mathbb{N}\}$ reaches a stationary point.  To estimate when this occurs, terminate the iterations when either of the following stopping criteria are met:
\begin{enumerate}
	\item The relative change between successive terms in the sequence $\{\beta^k\}$ falls below a specified tolerance;
	\[ \frac{\left\| \beta^{k+1} - \beta^k \right\|_2}{\|\beta^k\|_2} < \varepsilon,\]
	for some $\varepsilon>0$, or
	\item The support of the sequence $\{\beta^k\}$ is stationary for a specified number of successive iterations;
	\[ \mathrm{supp}(\beta^k) = \mathrm{supp}(\beta^{k+1}) = \cdots = \mathrm{supp}(\beta^{k+\eta}),\]
	for a fixed $\eta\in\mathbb{N}$ and some positive integer $k$.
\end{enumerate}

Stage-I is the largest contributor to the computational complexity of Algorithm~\ref{alg:matrix-final}, with each iteration having complexity $\mathcal{O}(4np)$.  In comparison, each outer loop of ADM computing the $\tau$ and $\gamma$-related subproblems has complexity $\mathcal{O}(4np)$, while each inner loop of ADM approximating the $\beta$-related subproblem has complexity $\mathcal{O}(8np)$.  In general, Algorithm~\ref{alg:matrix-final} and ADM will use a different number of iterations to terminate their iterative stages, so their overall complexities cannot be directly compared.  However, the numerical experiments in the next section indicate that Algorithm~\ref{alg:matrix-final} tends to have less overall complexity than ADM since Algorithm~\ref{alg:matrix-final} has a shorter runtime even in situations where it requires more iterations.

\section{Numerical Experiments}\label{sec:experiments}
In the following experiments, we apply the proposed proximity operator based approach presented in Algorithm~\ref{alg:matrix-final} and the alternating direction method (ADM) presented in~\cite{Lu-Pong-Zhang:CSDA:12} to solve the Dantzig selector problem~\eqref{problem:Dantzig} using both synthetic and real data sets.  The experiments using synthetic data are performed in MATLAB R2013a on single nodes of the Condor Supercomputer, hosted at AFRL/RIT Affiliated Resource Center.  The full capabilities of Condor were not taken advantage of; we ran the algorithms in serial using single nodes to emulate a typical high end consumer workstation.  Each utilized node is equipped with an Intel Xeon X5650 6 core CPU, with 2.67 GHz and 6$\times$8 GB RAM.  The experiments using the real data set are performed in MATLAB R2014a on a PC with an Intel Core i7-3630QM \@2.40 GHz processor and 16 GB RAM running Windows 7 Enterprise.

\begin{example} Synthetic Data Set \end{example}

In this series of simulations, sparse coefficient vectors are generated then recovered from noisy random linear observations using both Algorithm~\ref{alg:matrix-final} and ADM.  The parameters used are $n=720m,\; p = 2560m$ and $s = 80m$ for $m\in\{2,3,\ldots,10\}$, and $\sigma \in\{0.01,\; 0.05,\; 0.10,\; 0.15\}$ corresponding to 1\%, 5\%, 10\% and 15\% noise levels.  For each combination of $m$ and $\sigma$, 100 simulations each of Algorithm~\ref{alg:matrix-final} and ADM are performed.  All other parameters for ADM are selected following the guidelines in~\cite[Section 3]{Lu-Pong-Zhang:CSDA:12} and the parameters selected for the initialization stage of Algorithm~\ref{alg:matrix-final} are $tol = 2\sigma$, $\alpha = 0.2\|A\|_2^2$ and $\delta = \sigma\sqrt{2\log p}$. The parameters for the stopping criteria are $\varepsilon = 10^{-4}$ and $\eta = \max\left\{ \left\lceil 4\log(\alpha)\log(\sigma) + 2\alpha\right\rceil, \; 5 \right\}$.  The parameters $tol, \delta$ and $\eta$ depend on the noise level $\sigma$, which in practice may not be known a priori.  However, the noise level may be well-approximated using existing methods.  In the event that the noise level is not accurately approximated, the speed of convergence of Algorithm~\ref{alg:matrix-final} will be affected, but the accuracy should not suffer much.  The stopping criteria

The $n\times p$ sensing matrices $X$ are generated for each simulation with independent Gaussian entries normalized so each column has unit $\ell_2$ norm.  To generate the coefficient vector, for each simulation a support set $S$ of size $|S|=s$ is selected uniformly at random.  Then the vector $\beta$ with indices in $S$ is defined according to $\beta_{S(i)} = \epsilon_i(1+|a_i|)$, where $\{a_i\}$ is a collection of independently and identically distributed random variables sampled from the standard normal distribution and $\{\varepsilon_i\}$ is a collection of independently and identically distributed random variables sampled from the uniform distribution on $\{-1,1\}$.  For $i\notin S$, set  $\beta_i = 0$.  Then Algorithm~\ref{alg:matrix-final} and ADM are used to approximate the Dantzig selector $\widehat{\beta}$ from the observations $y = X\beta+z$, where $z$ is a collection of independent and identically distributed random variables sampled from the normal distribution with mean zero and standard deviation $\sigma$.

\begin{figure}[htb]
\centering
		\includegraphics[width=0.48\textwidth]{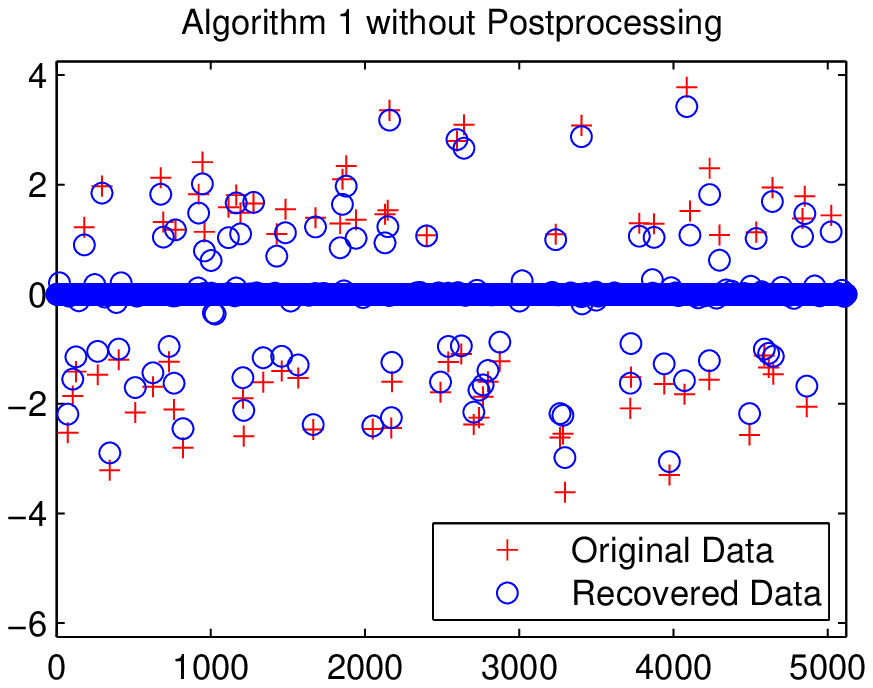}
		\includegraphics[width=0.48\textwidth]{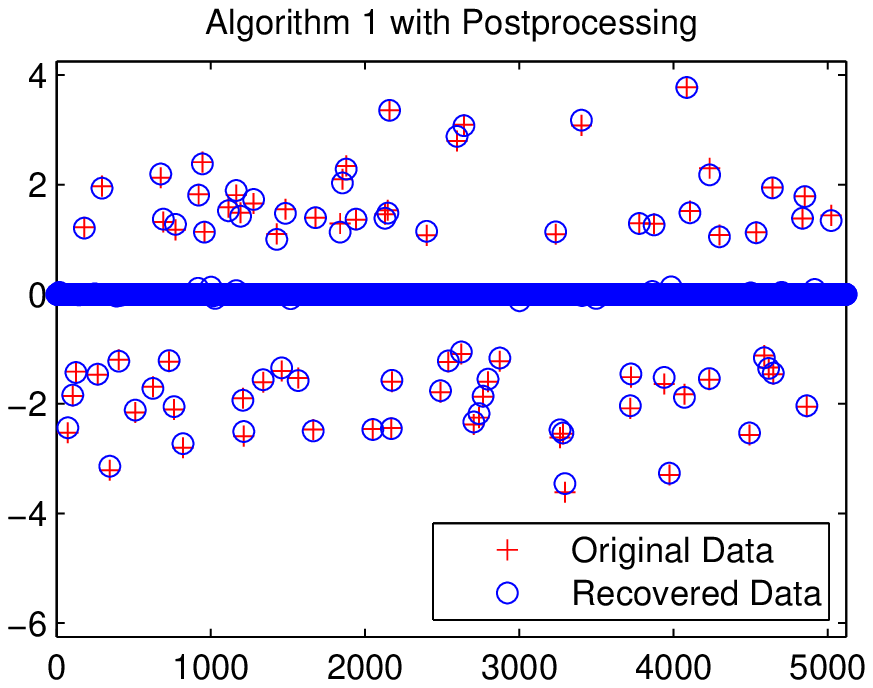}

		\includegraphics[width=0.48\textwidth]{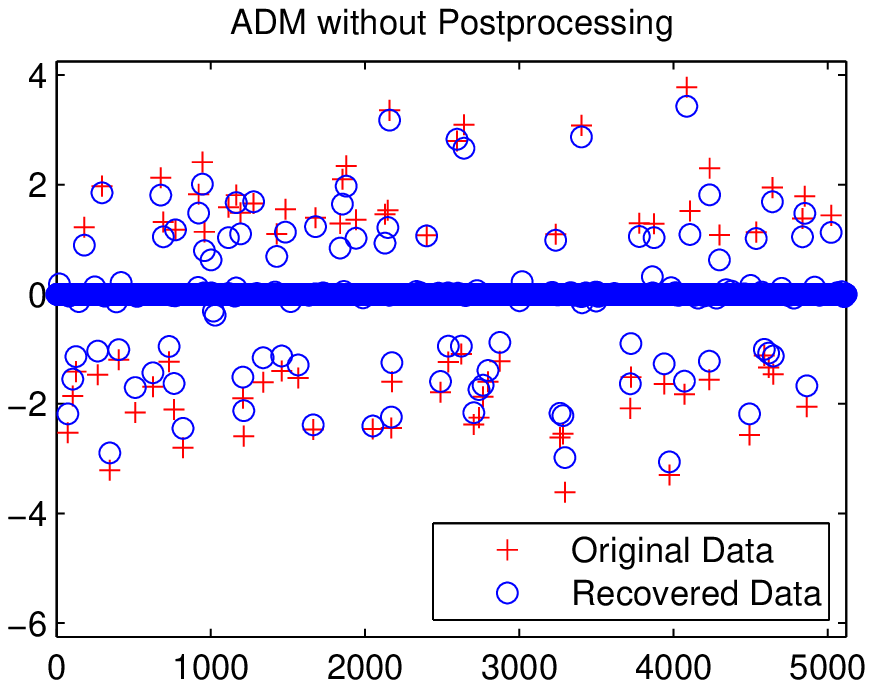}
		\includegraphics[width=0.48\textwidth]{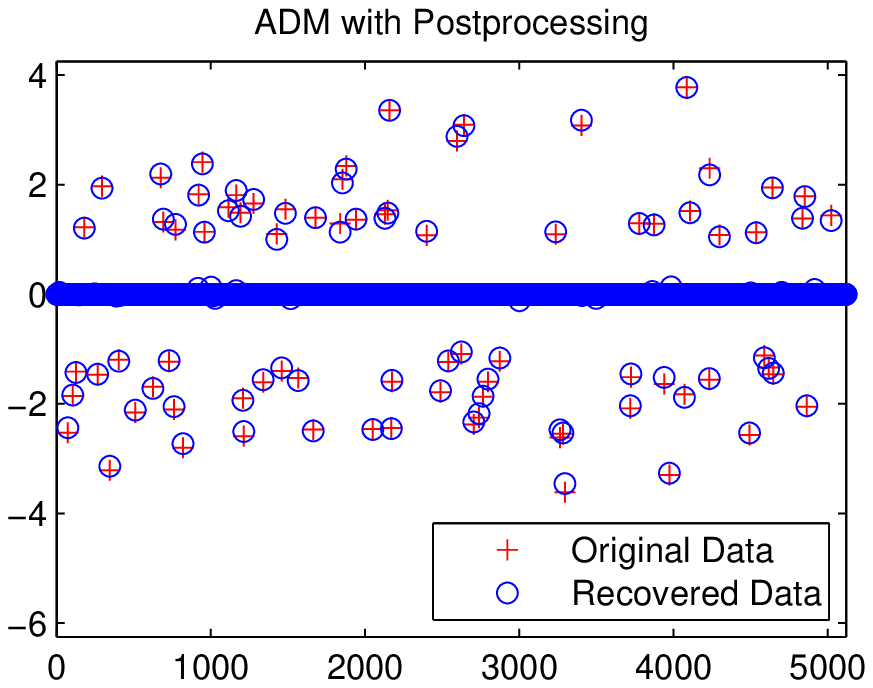}
\caption{A demonstration of the accuracy of the Dantzig selector recovered using Algorithm~\ref{alg:matrix-final} and ADM with and without postprocessing for a single simulation of Experiment 3.1 with parameters $\sigma = 0.05$ with $(n,p,s)=(720, 2560, 80)$. }
\label{accuracy plot med noise}
\end{figure}

The accuracy of the Dantzig selector recovered in the simulations is measured by
\begin{equation}\label{accuracy}
	\rho := \left(\frac{ \| \beta - \widehat{\beta}\|_2^2}{\sum_{j=1}^p \min\{ \beta_j^2,\sigma^2\}}\right)^{1/2},
\end{equation}
where $\beta$ denotes the true parameter and $\widehat{\beta}$ denotes the parameter recovered using either Algorithm~\ref{alg:matrix-final} or ADM.
The denominator term of Equation~\eqref{accuracy} is the expected mean squared-error of the ideal estimator \cite{Candes-Tao:AS:07}. Therefore, $\rho\geq 0$, and a smaller $\rho$ implies a more accurate estimator.

\begin{figure}[thb]
	\centering
		\includegraphics[width=0.48\textwidth]{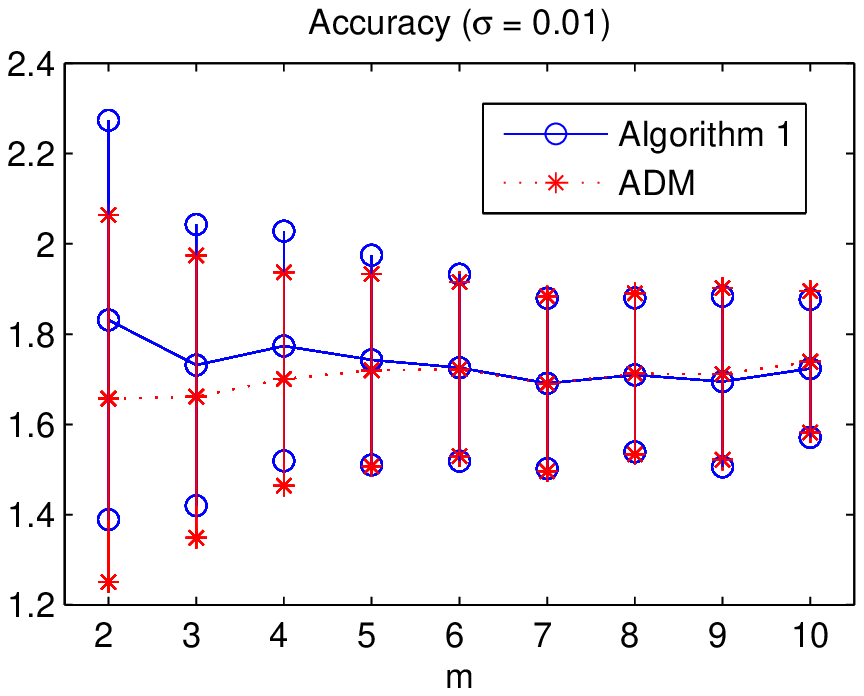}
		\includegraphics[width=0.48\textwidth]{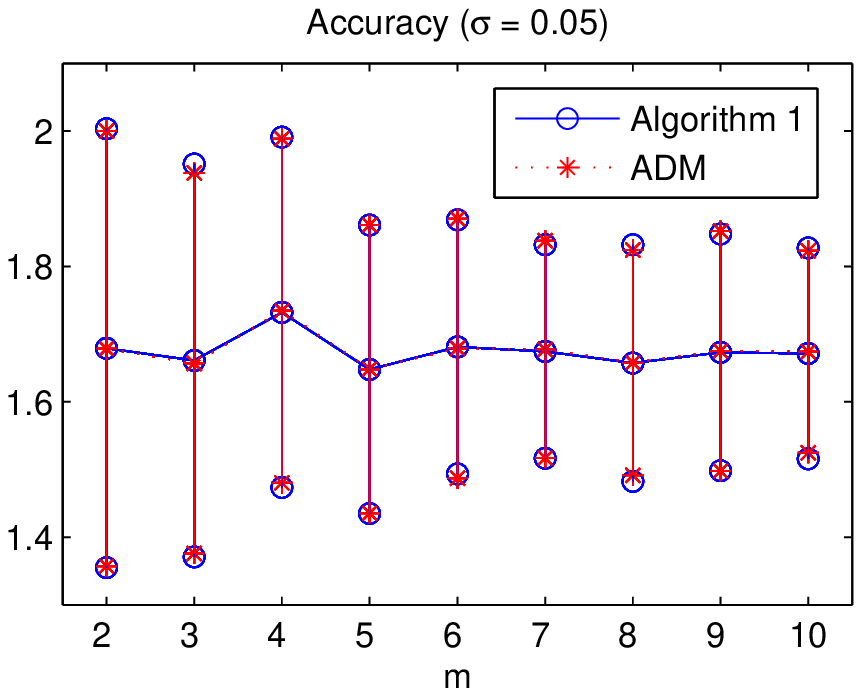}

		\includegraphics[width=0.48\textwidth]{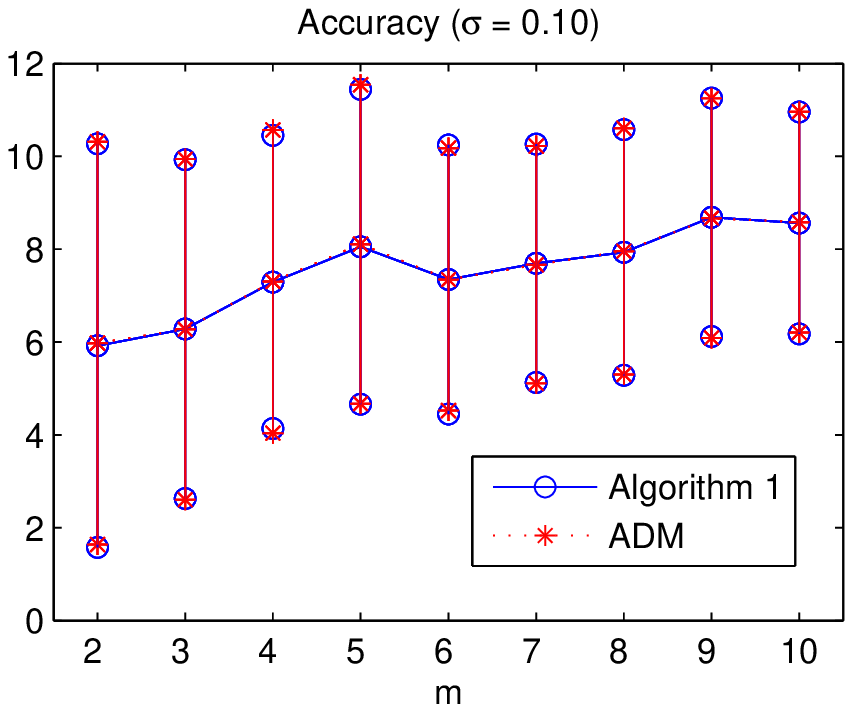}
		\includegraphics[width=0.48\textwidth]{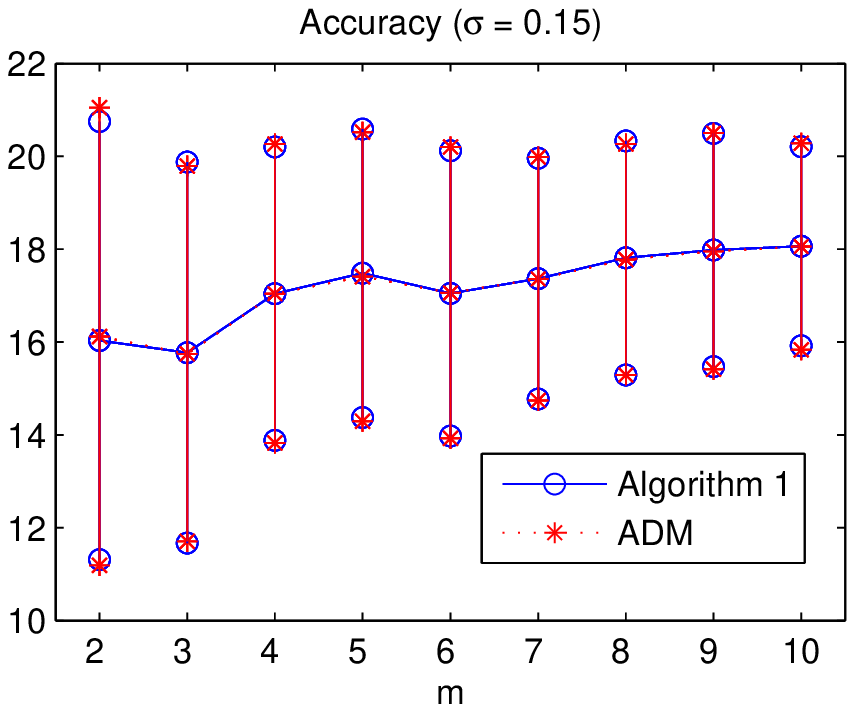}
	\caption{A comparison of $\rho$, computed as in Equation~\eqref{accuracy} which measures the accuracy of the approximated Dantzig selectors, for Algorithm~\ref{alg:matrix-final} and ADM for noise levels $\sigma = 0.01, 0.05, 0.10$ and $0.15$ in Example~3.1.  In each plot, the points along the curve represent the mean number of iterations required for each parameter $m$ over 100 simulations, and the points on the vertical lines represent one standard deviation away from the means. }
	\label{fig:accuracy}
\end{figure}

The effects of Stage-II of Algorithm~\ref{alg:matrix-final} and the postprocessing step of ADM are illustrated in Figure~\ref{accuracy plot med noise}.  The figure displays values of the exact simulated vector $\beta$ and of the Dantzig selector $\widehat{\beta}$ approximated by each algorithm,  
first without performing postprocessing (the left column of Figure~\ref{accuracy plot med noise}) and then with postprocessing (the right column of Figure~\ref{accuracy plot med noise}) for one simulation with parameters $(n,p,s)=(720, 2560, 80)$ and noise $\sigma=0.05$. One can clearly see that the postprocessing not only corrects the underestimated magnitudes of nonzero components of the estimates, but also eliminates unwanted nonzero components.

\begin{figure}[thb]
	\centering
		\includegraphics[width=0.48\textwidth]{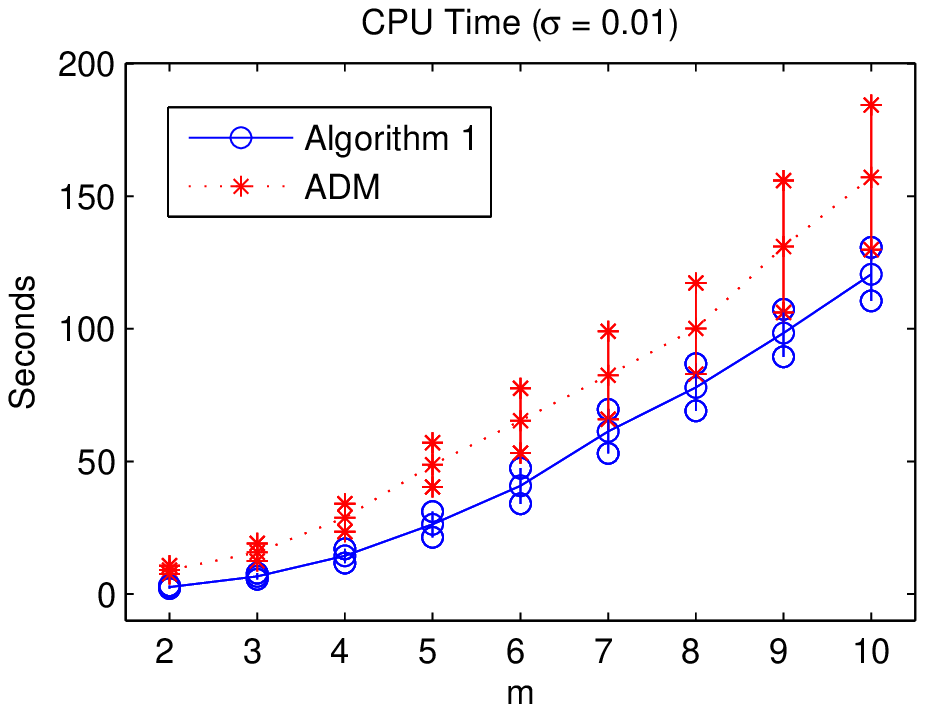}
		\includegraphics[width=0.48\textwidth]{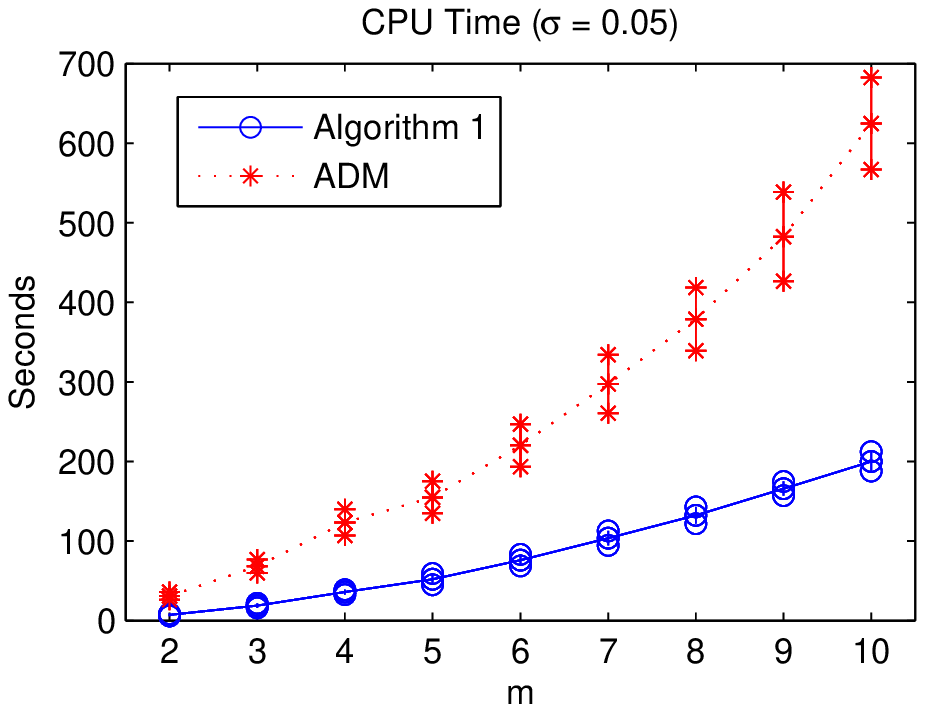}

		\includegraphics[width=0.48\textwidth]{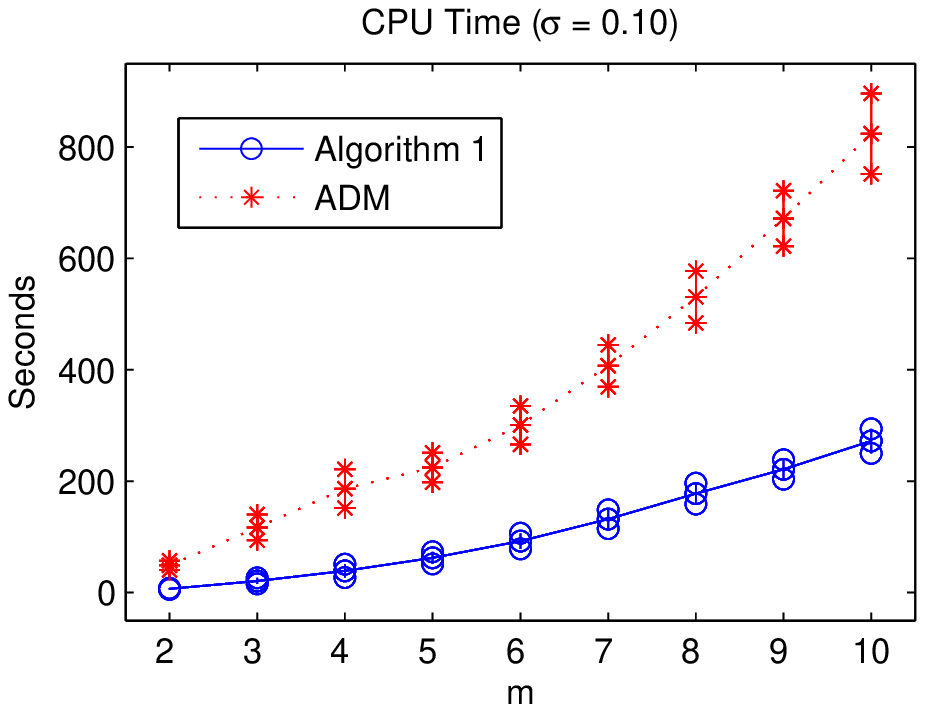}
		\includegraphics[width=0.48\textwidth]{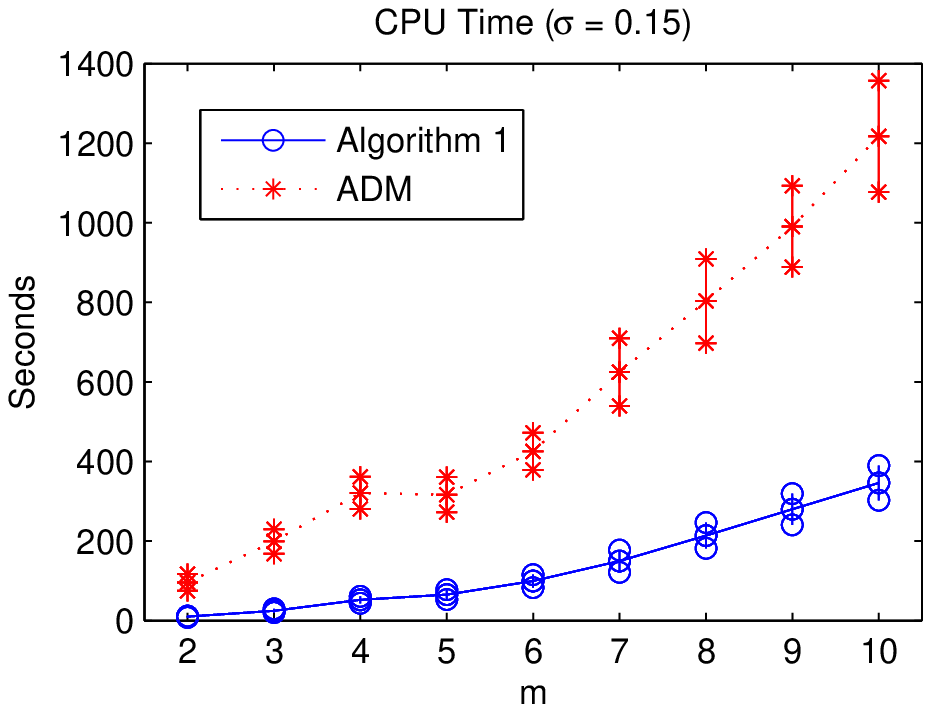}

	\caption{A comparison of the CPU time required to recover the Dantzig selector using Algorithm~\ref{alg:matrix-final} and ADM for noise levels $\sigma = 0.01, 0.05, 0.10$ and $0.15$ as in Example~3.1.  In each plot, the points along the curve represent the mean number of iterations required for each parameter $m$ over 100 simulations, and the points on the vertical lines represent one standard deviation away from the means.}
	\label{fig:time}
\end{figure}

\begin{figure}[thb]
	\centering
		\includegraphics[width=0.48\textwidth]{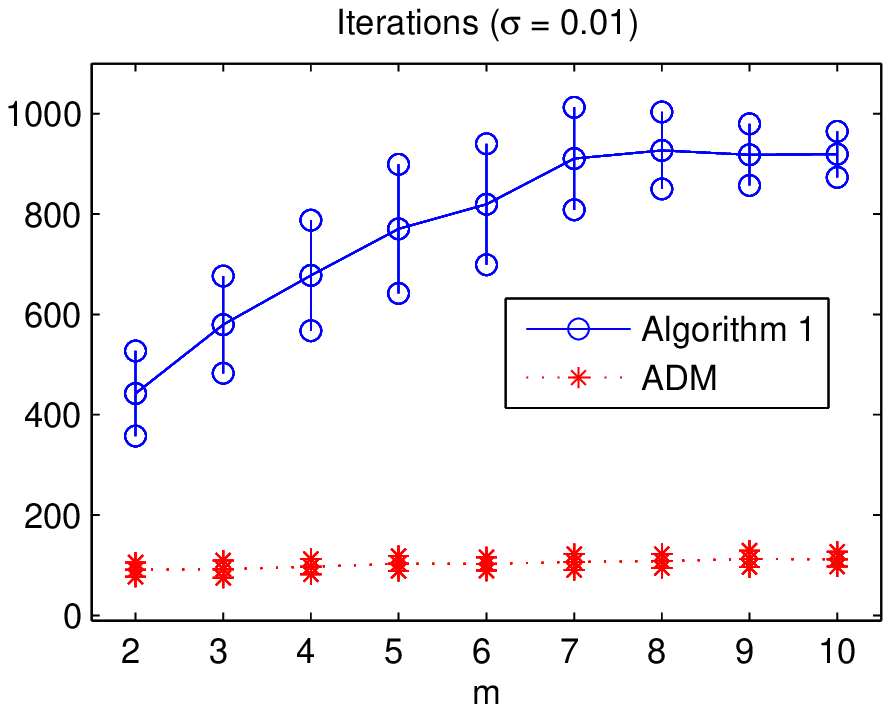}
		\includegraphics[width=0.48\textwidth]{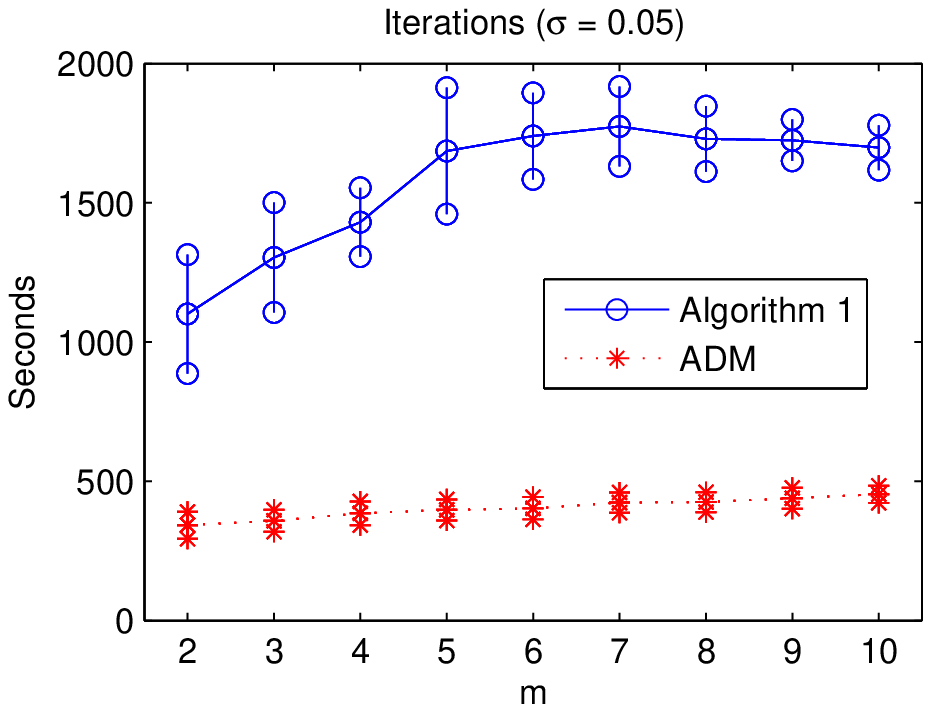}

		\includegraphics[width=0.48\textwidth]{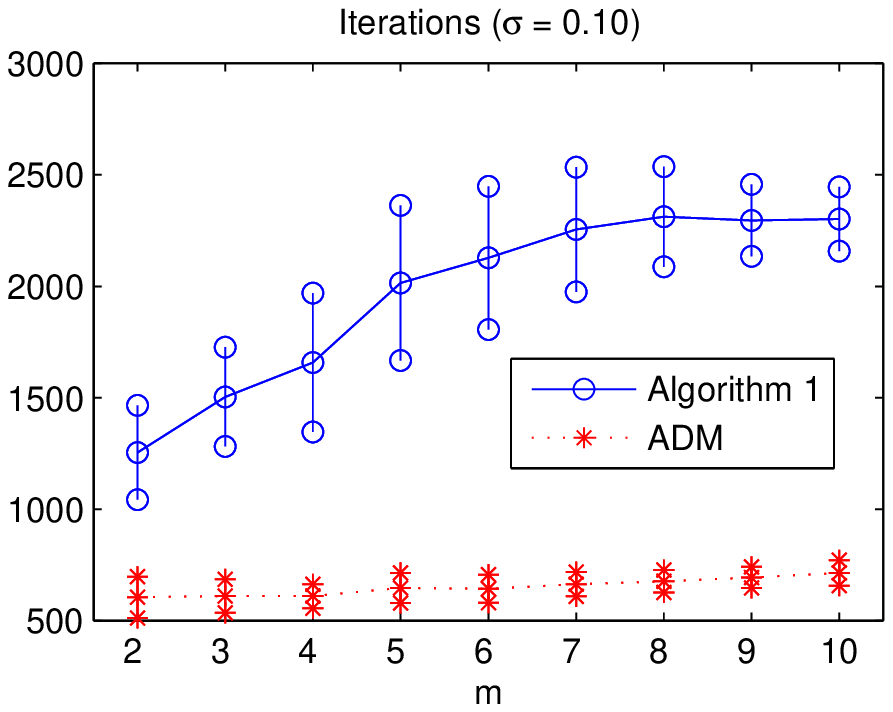}
		\includegraphics[width=0.48\textwidth]{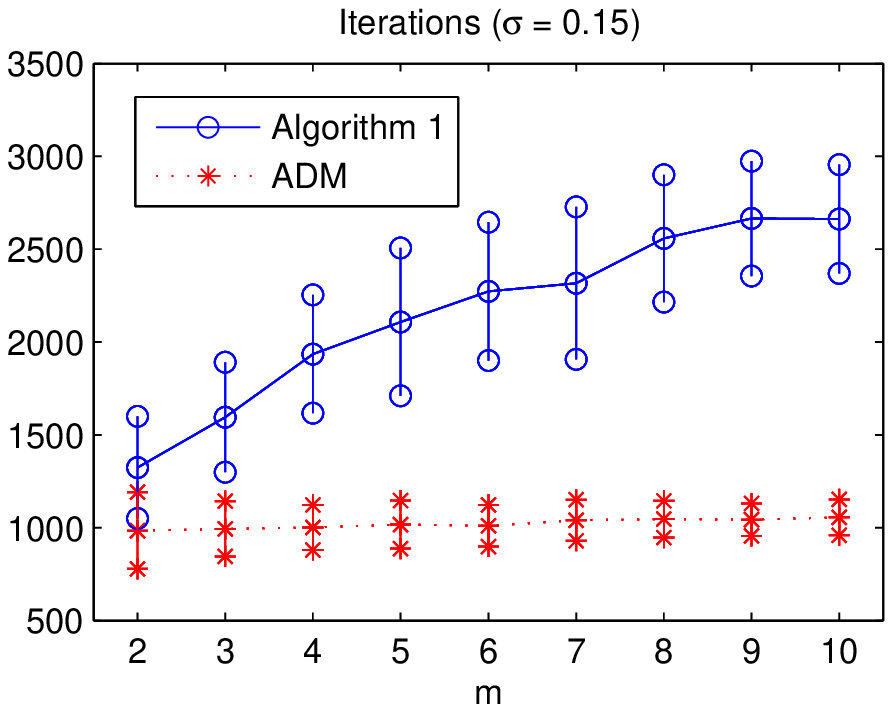}

	\caption{A comparison of the number of iterations required to recover the Dantzig selector using Algorithm~\ref{alg:matrix-final} and ADM for noise levels $\sigma = 0.01, 0.05, 0.10$ and $0.15$ in Example~3.1.  In each plot, the points along the curve represent the mean number of iterations required for each parameter $m$ over 100 simulations, and the points on the vertical lines represent one standard deviation away from the means.}
	\label{fig:iterations}
\end{figure}

The results of the above simulations suggest that Algorithm~\ref{alg:matrix-final} has less overall complexity than ADM, since the accuracy of the Dantzig selectors approximated by each method are similar yet Algorithm~\ref{alg:matrix-final} completes much faster than ADM, even when requiring more iterations.
Figure~\ref{fig:accuracy} displays the mean and standard deviation of $\rho$ over 100 simulations for each parameter $m$ and $\sigma$ and for both Algorithm~\ref{alg:matrix-final} and ADM. Note that the accuracy of the Dantzig selector approximated by the two algorithms are very similar across all parameter levels.
Figure~\ref{fig:time} displays the mean and standard deviation of the CPU time, and Figure~\ref{fig:iterations} displays the mean and standard deviation of the total number of iterations performed by Algorithm~\ref{alg:matrix-final} and the total number of iterations performed in the inner loop of ADM for 100 simulations for each parameter $m$ and $\sigma$.
From the figures, one can see that although Algorithm~\ref{alg:matrix-final} requires more iterations than ADM, Algorithm~\ref{alg:matrix-final} completes significantly faster.


\begin{example} Leukemia Data Set \end{example}

In this experiment, the Dantzig selectors produced by Algorithm~\ref{alg:matrix-final} and by ADM are used with a collection of biomarker data to indicate whether a patient may be diagnosed with a specific type of cancer.  The biomarker dataset, first introduced in~\cite{Golub} and studied in~\cite{Tibshirani et al,Wang-Yuan:SISC:12}, contains the measurements of 7128 genes related to leukemia diagnoses.  The dataset is split into a training set and a testing set.  The training set is sampled from 38 patients, 27 of whom were diagnosed with acute lymphocytic leukemia (ALL) and 11 with acute mylogenous leukemia (AML).  The testing set is sampled from 34 patients, 20 diagnosed with ALL and 14 with AML.  

Let $X_\text{train}\in\mathbb{R}^{38\times 7128}$ contain the biomarker data in the training set, where each row is all~7128 gene measurements of a single patient and each column has been normalized to have unit $\ell_2$ norm. Let $y_\text{train} \in\mathbb{R}^{38}$ be the column vector indicating the diagnosis of each patient in the training set:
\begin{equation*}
	y_\text{train}(j) = \begin{cases}0, &\text{ if patient $j$ in the training set is diagnosed with ALL}, \\
	1, &\text{ if patient $j$ in the training set is diagnosed with AML}.
\end{cases}
\end{equation*}
Similarly define $X_\text{test} \in \mathbb{R}^{34\times 7128}$ and $y_\text{test} \in\mathbb{R}^{34}$ from the data in the testing set. 

This experiment has a training phase and a testing phase.  In the training phase, a sparse vector $\widehat{\beta}$ is found such that $X_\text{train}\widehat{\beta} = y_\text{train}$.  To preprocess the data, only the biomarkers with the largest variance are used to train the parameter $\widehat{\beta}$.  To this end, select a positive integer $N$, and let $\Lambda$ be the $N$ indices of columns from $X_\text{train}$ with largest variance.  Let $\tilde{X}_\text{train} \in \mathbb{R}^{38\times N}$ be the submatrix of $X_\text{train}$ with columns in $\Lambda$.  Form the reduced problem
\begin{equation}\label{eq:leukemia training reduced}
	\widehat{\beta}_\Lambda \in \mathrm{argmin}_{\beta \in \mathbb{R}^N} \left\{ \|\beta\|_1 : \; \left\| \tilde{X}^\top_\text{train} \left(\tilde{X}_\text{train}\beta - y_\text{train}\right)\right\|_\infty \leq \delta \right\}.
\end{equation}
The Dantzig selector $\widehat{\beta}_\Lambda \in \mathbb{R}^{N}$ satisfying problem~\eqref{eq:leukemia training reduced} is computed using Algorithm~\ref{alg:matrix-final} and ADM, then extended to form  $\widehat{\beta}\in\mathbb{R}^{7128}$ via
	\begin{equation*}\begin{cases}
		\widehat{\beta}(\Lambda(j)) = \widehat{\beta}_\Lambda(j), &\text{ for } j = 1:N,\\
		\widehat{\beta}(k) = 0, &\text{ if } k\notin \Lambda.
	\end{cases}\end{equation*}

In the testing phase, the trained parameter $\widehat{\beta}$ is used to predict the diagnoses of patients in the testing set.  The predictive indicator vector $\widehat{y}_\mathrm{test}\in\mathbb{R}^{34}$ is computed from $y=X_\text{test}\widehat{\beta}$ by thresholding and clustering values near the threshold boundary.
Set 
\begin{equation*}
	\widehat{y}_\text{test}(j) = \begin{cases}
		0, &\text{ if } y(j) < 0.49, \\
		1, &\text{ if } 0.51< y(j). \\
	\end{cases}
\end{equation*}
Let $y_0 = \max \{ y(j) : y(j) < 0.49\}$ and $y_1 = \min \{y(j) : 0.51 < y(j)\}$.  For values of $j$ such that $0.49\leq y(j)\leq0.51$, set
\begin{equation*}
	\widehat{y}_\text{test}(j) = \begin{cases}
		0, &\text{ if } |y(j)-y_0| \leq |y(j) - y_1|, \\
		1, &\text{ if } |y(j)-y_1|<|y(j)-y_0|.\\
	\end{cases}
\end{equation*}
The $j^\text{th}$ patient in the testing set is predicted to have a diagnosis of ALL if $\widehat{y}_\text{test}(j) = 0$ and a diagnosis of AML if $\widehat{y}_\text{test}(j) = 1$. 

\begin{figure}
	\centering
	\begin{subfigure}[t]{0.48\textwidth}
		\includegraphics[width=\textwidth]{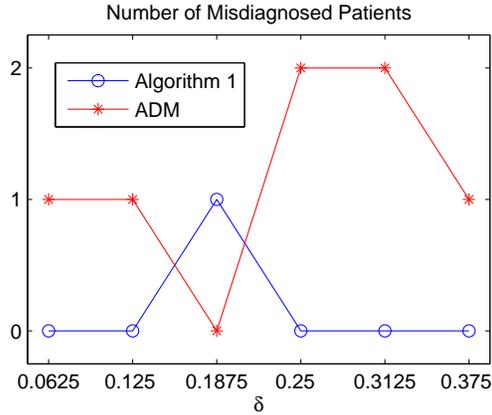}
		\caption{The number of patients in the testing set misdiagnosed by the predicted indicator vector recovered by Algorithm~1 and ADM for various values of the parameter $\delta$. }
	\end{subfigure}
	\begin{subfigure}[t]{0.48\textwidth}
		\includegraphics[width=\textwidth]{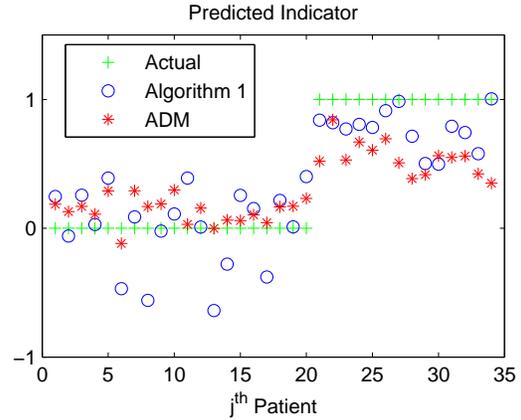}
		\caption{Values of the actual diagnosis indicator vector $y_\text{test}$ along with values of the predicted indicator vectors recovered by Algorithm~1 and ADM prior to separating values into classification groups for $\delta = 0.25$.}
		\label{fig:predictor}
	\end{subfigure}

	\begin{subfigure}[t]{0.48\textwidth}
		\includegraphics[width=\textwidth]{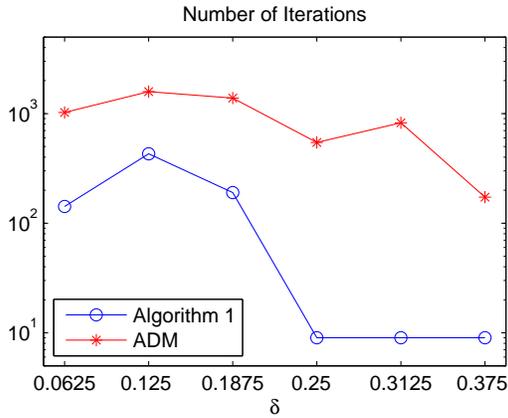}
		\caption{The number of iterations required by Algorithm~\ref{alg:matrix-final} and ADM to recover the Dantzig selector $\widehat{\beta}_\Lambda$ for various values of the parameter $\delta$.}
	\end{subfigure}
	\begin{subfigure}[t]{0.48\textwidth}
		\includegraphics[width=\textwidth]{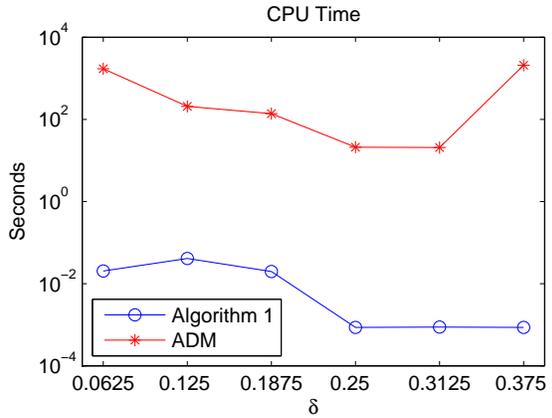}
		\caption{The CPU runtime required by Algorithm~\ref{alg:matrix-final} and ADM to recover the Dantzig selector $\widehat{\beta}_\Lambda$ for various values of the parameter $\delta$.}
	\end{subfigure}
	\caption{Plots regarding the indicator vector, used to predict a leukemia diagnosis in patients in the testing set as in Example 3.2, using both Algorithm~\ref{alg:matrix-final} and ADM.  }
	\label{fig:leukemia}
\end{figure}

The above procedure was used to predict the diagnoses of patients in the testing set using the Dantzig selector $\widehat{\beta}_\Lambda$ computed  using both Algorithm~\ref{alg:matrix-final} and ADM with parameters $N=1000$, $\alpha=\|X_\text{train}^\top X_\text{train}\|^2_2$ and $tol = 0.1$ and stopping criteria parameters $\eta=80$ and $\varepsilon = 10^{-4}$ for each $\delta$ in $\{0.0625,\;0.125,\;0.1875,\;0.25,\;0.3125,\;0.375\}$. Figure~\ref{fig:leukemia} displays the results of these simulations regarding the accuracy of the recovered indicator vector $\widehat{y}_\text{test}$ in predicting the leukemia diagnoses of patients in the testing set, as well as the number of iterations and CPU runtime used by Algorithm~\ref{alg:matrix-final} and ADM.  As shown in Figure~\ref{fig:leukemia}(a), Algorithm~\ref{alg:matrix-final} typically predicted the diagnoses of patients with higher acuracy than ADM.  Moreover, for each parameter $\delta$, Algorithm~\ref{alg:matrix-final} used fewer iterations than ADM and the time used by Algorithm~\ref{alg:matrix-final} was several orders of magnitude less than the time used by ADM, as shown in Figures~\ref{fig:leukemia}(c) and (d).  Figure~\ref{fig:leukemia}(b) illustrates the tendency of Algorithm~\ref{alg:matrix-final} to predict the diagnosis of patients in the testing set with higher accuracy than ADM.  This plot displays the values of $y = X_\text{test}\widehat{\beta}$ recovered using Algorithm~\ref{alg:matrix-final}  and by ADM prior to the thresholding step, along with the true values of $y_\text{test}$.  Since the values recovered by Algorithm~\ref{alg:matrix-final} tend to be more spread out, it is easier to accurately separate them into two distinct clusters.

\section{Conclusion}\label{sec:conclusion}
In this paper, we have developed an iterative algorithm to compute the Dantzig selector, the solution to the minimization problem in problem \eqref{problem:Dantzig}.  The algorithm is based on the proximity operator and its relationship to problem \eqref{problem:Dantzig-New}.  The two-stage algorithm we proposed is an improvement over some other recently proposed methods to find the Dantzig selector, which require the use of inner loop to estimate parameters within each step of the algorithm.  Additionally, our proposed method uses a novel stopping criterion based upon the support of the approximated parameters.   

We compare the proposed algorithm to the alternating direction method proposed in~\cite{Lu-Pong-Zhang:CSDA:12}. Theoretically, two methods produce results of similar quality, however each iteration of Stage-I of Algorithm~\ref{alg:matrix-final} has less computational complexity than each iteration of the inner loop of the alternating direction method.  The numerical experiments demonstrate that the proposed method and the alternating direction method typically approximate the Dantzig selectors with similar accuracy, yet Algorithm~\ref{alg:matrix-final} produces results in significantly less time, whether it uses more iterations than the alternating direction method, as in Experiment~3.1, or fewer iterations than the alternating direction method, as in Experiment~3.2.


\section*{Acknowledgements}
The authors are grateful to the anonymous reviewers for their helpful comments.  The authors also would like to thank Drs.\ X.\ Wang and X.\ Yuan for providing the MATLAB code to approximate the Dantzig selector using the alternating direction method and for sharing the real dataset used in Example 3.2.

\bibliographystyle{siam}

\end{document}